\documentclass[11pt]{amsart}
\usepackage[
backend=biber,
style=alphabetic,
sorting=ynt
]{biblatex}
\addbibresource{Delta_coupling_bound_eigenvalue.bib}
\usepackage{tikz}
\usetikzlibrary{fit}
\usepackage[toc]{appendix}
\renewcommand{\epsilon}{\varepsilon}

\setlength{\textwidth}{\paperwidth}
\addtolength{\textwidth}{-2in}
\calclayout
%----------Packages----------
\input{packages.tex}
\mathtoolsset{showonlyrefs}
%------------------Document----------------------------
\begin{document}
\title[Eigenvalue bounds for Schrödinger operators]{Eigenvalue bounds for Schr\"odinger operators on \\ quantum graphs with $\delta$-coupling conditions}
\author{Duc Hoang Cao}
\date{}                
\address{Department of Mathematics, King's College London, Strand, London, WC2R 2LS, UK}

\email{hoang.cao@kcl.ac.uk}

	\maketitle 
	\begin{abstract}
We prove sharp upper bounds for eigenvalues of Schr\"odinger operators on quantum graphs with $\delta$-coupling (also known as Robin) conditions at all vertices. The bounds depend on the geometry of the graph, on the potential, and the strength of the couplings, and as the coupling strengths grow, the dependence on the topology gets weaker, answering a question of Rohleder and Seifert. 
We obtain those bounds via the variational characterisation, comparing with appropriate linear combinations of eigenfunctions with Dirichlet and Neumann vertex conditions.
\end{abstract}

\section{Introduction and Main Results}
Let $\Gamma=(V,E)$ be a metric---also sometimes called quantum---graph, that is a graph where each edge $e \in E$ is considered as an interval of length $\ell_e$. Let $q\in L^1(\Gamma)$ and $H_q$ be the Schr\"odinger operator $H_q:=\Delta+q$, where $\Delta$ is the positive Laplacian, we consider the eigenvalue problem for $H_q$ with $\delta$-coupling conditions of strength $\alpha_v \in \mbb R$ at every vertex $v \in V$ defined as follows:
\begin{equation}\label{eigenvalue_problem}
	\begin{cases}
		H_q f = \lambda f &\text{on edges};\\
		f \text{ is continuous} & \text{on } \Gamma; \\
		\sum_{e \in E_v} f_e'(v) = \alpha_v f(v) &\text{at } v \in V,
	\end{cases}
\end{equation}
where $E_v$ is the set of edges attached at $v$, and $f'_e(v)$ is the derivative of $f$ at $v$ in the direction pointing out of $v$ into the edge $e\in E_v$. Note that the $\delta$-coupling condition of strength zero at every vertex is the classical Neumann (also called Kirchhoff) conditions, and one can interpret the classical Dirichlet conditions as an infinite strength at those vertices. In this paper, we only consider real valued potentials, so that the spectrum of $H_q$ is discrete and forms a sequence:
\begin{equation}\label{discrete_spectrum}
	\lambda_1(H_q)\le\lambda_2(H_q)\le\lambda_3(H_q)\le  \cdots\nearrow\infty,
\end{equation}
and we are interested in upper bounds on $\lambda_k(H_q)$ that depend only on the geometry of $\Gamma$, on the potential $q$, and on $\alpha:=\sum_{v\in V}\alpha_v$. Specifically, in \cite[Proposition 7.3]{survey_quantum_graph}, Rohleder and Seifert prove that
\begin{align}\label{trivial_bound}
	\lambda_1(H_q)\le \frac{1}{L(\Gamma)}\left(\int_\Gamma q \df x+\alpha\right),
\end{align}
where $L(\Gamma)$ is the total length of $\Gamma$. However, the observation that for $q\equiv0$, the smallest eigenvalue 
\begin{equation}\label{bound_by_Dirichlet}
\lambda_1(H_0) \to \min_{e \in E} \frac{\pi^2}{\ell_e^2}\le \frac{\pi^2|E|^2}{L(\Gamma)^2},
\end{equation}
as all $\alpha_v\to\infty$, and thus remains bounded, leads them to propose \cite[Open Problem 7.6]{survey_quantum_graph}: ``Prove an upper bound for the principal eigenvalue of a Schr\"odinger operator with $\delta$-coupling conditions which is sharp as the coupling coefficients $\to \infty$". Our main theorem provides an answer to this question:

\begin{theorem}\label{main_theorem_1}
Let $\Gamma$ be a connected metric graph with Betti number $\beta$, i.e. $\beta:=|E|-|V|+1$, the length of the longest edge $\ell_{\max}$, and $P$ pendants, i.e. vertices of degree one. Suppose further that $\Gamma$ is not a cycle. We define 
\begin{equation}
     M(\Gamma):=\frac{\pi}{\sqrt{L(\Gamma)}}\left(\frac{P}{2}+\frac{3\beta}{2}-1\right).
\end{equation}
Let $p\in [1,\infty]$ and $H_q$ be the Schr\"odinger operator on $\Gamma$ with potential $q\in L^p(\Gamma)$ and coupling strengths 
	$\alpha_v\ge 0$ for all $v\in V$. Then, for all sufficient large $\alpha$, one has
    \begin{equation}
        \begin{aligned}
            \lambda_1(H_q)\le \left[\|q_+\|_{L^p(\Gamma)}\left(\frac{2}{\ell_{\max}}\right)^{1/p}+\frac{\pi^2}{\ell_{\max}^2}\right]\cdot\left[1+O\left(\frac{M(\Gamma)}{\alpha}(\|q_+\|_{L^p(\Gamma)}+1)\right)\right],
        \end{aligned}
    \end{equation}
    and for all sufficiently small $\alpha$, one has
     \begin{equation}\begin{aligned}
        \lambda_1(H_q)&\le \left[\|q_+\|_{L^p(\Gamma)}\left(\frac{1}{\sqrt{L(\Gamma)}}+M(\Gamma)\right)^{2/p}+\frac{M(\Gamma)^2}{L(\Gamma)}\right]\cdot\left[1+O\left(\alpha M(\Gamma)(1+\|q_+\|_{L^p(\Gamma)})\right)\right]
        \end{aligned}
    \end{equation}
\end{theorem}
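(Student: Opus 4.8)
The plan is to run the Rayleigh--Ritz principle for the bottom eigenvalue,
\[
\lambda_1(H_q)=\min_{0\ne f\in H^1(\Gamma)}\frac{\mathcal{Q}[f]}{\|f\|_{L^2(\Gamma)}^2},\qquad
\mathcal{Q}[f]=\int_\Gamma|f'|^2\df x+\int_\Gamma q|f|^2\df x+\sum_{v\in V}\alpha_v|f(v)|^2,
\]
whose form domain consists only of continuous $H^1$-functions, so the $\delta$-derivative condition need not be imposed on the trial function and any continuous $f$ furnishes an upper bound. I would use a single two-parameter family, a continuous gluing $f=\phi_D+t\,\phi_N$, where $\phi_D$ is a Dirichlet half-wave $\sin(\pi x/\ell_{\max})$ supported on the longest edge (so $\phi_D$ vanishes at every vertex and contributes nothing to the coupling term), $\phi_N$ is a globally spread ``Neumann-type'' mode adapted to the whole graph, and $t\ge 0$ is a weight to be optimised against $\alpha$. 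Substituting into $\mathcal{Q}$, the kinetic energy splits as $\|\phi_D'\|^2+t^2\|\phi_N'\|^2$ up to a controllable cross term, the coupling term becomes $t^2\sum_v\alpha_v|\phi_N(v)|^2\asymp t^2\alpha$ because only the $\phi_N$-part is nonzero at vertices, and the potential is handled in one stroke by Hölder, $\int_\Gamma q|f|^2\le\|q_+\|_{L^p}\,\|f\|_{L^{2p'}}^2$ with $1/p+1/p'=1$.

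Optimising $t$ in this quotient produces both regimes from the same computation. For large $\alpha$ the penalty $t^2\alpha$ forces $t$ to be small, so $\phi_D$ dominates: the kinetic ratio converges to $\|\phi_D'\|^2/\|\phi_D\|^2=\pi^2/\ell_{\max}^2$, Hölder with $|\phi_D|\le1$ gives the potential factor $\|q_+\|_{L^p}(2/\ell_{\max})^{1/p}$, and the residual $\phi_N$-admixture, tuned at $t\asymp\alpha^{-1}$, leaves exactly the relative error $O\!\big(M(\Gamma)\alpha^{-1}(\|q_+\|_{L^p}+1)\big)$. For small $\alpha$ the penalty is harmless and one lets $\phi_N$ dominate: the quotient is driven by $\|\phi_N'\|^2/\|\phi_N\|^2=M(\Gamma)^2/L(\Gamma)$, the shape ratio $\|\phi_N\|_{L^{2p'}}^2/\|\phi_N\|_{L^2}^2$ enlarges the interval value $L(\Gamma)^{-1/p}$ to $(L(\Gamma)^{-1/2}+M(\Gamma))^{2/p}$ because $\phi_N$ must oscillate, and the now-small coupling term contributes the relative error $O\!\big(\alpha M(\Gamma)(1+\|q_+\|_{L^p})\big)$. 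In each case I would verify the endpoint exponents $p=1$ and $p=\infty$ separately, since the Hölder step degenerates there.

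The heart of the proof, and the step I expect to be the genuine obstacle, is the construction of the global mode $\phi_N$ realising the constant $M(\Gamma)=\tfrac{\pi}{\sqrt{L(\Gamma)}}\big(\tfrac{P}{2}+\tfrac{3\beta}{2}-1\big)$ with simultaneously small kinetic energy, controlled $L^{2p'}$-norm, and controlled vertex values. I would obtain it by unfolding $\Gamma$ onto an interval of length $L(\Gamma)$ via a covering walk and pulling back a low-frequency cosine; the quantity $\tfrac{P}{2}+\tfrac{3\beta}{2}-1$ should appear as the number of half-oscillations that the walk is forced to make, with each pendant accounting for the $P/2$ term and each independent cycle for the $3\beta/2$ term. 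Making this precise is delicate because the pulled-back function must be single-valued and continuous at the vertices where several edges meet, and the cycles are exactly what obstructs a naive unfolding; this is where the hypothesis that $\Gamma$ is not a cycle and the bookkeeping of $\beta$ enter. Once $\phi_N$ is built with these three properties, the remaining work, expanding the quotient in $t$, optimising, and collecting errors uniformly in $p$, is routine.
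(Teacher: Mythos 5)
Your overall skeleton is the same as the paper's: a trial function that is a weighted combination of the Dirichlet half-wave on the longest edge and a global Neumann-type mode, H\"older's inequality to absorb the potential through $\|q_+\|_{L^p}$ and an $L^\infty$ bound, and a weight that interpolates between the two regimes. (The paper fixes the weight to be $\alpha$ on the Dirichlet part, i.e.\ $f=\alpha f^D+f^N$, rather than optimising a free parameter $t$, but after rescaling this is your $t\asymp\alpha^{-1}$ and the resulting error terms agree.) Two small points you gloss over but which the paper handles explicitly: the sign of $f^N$ is chosen so that $\langle f^D,f^N\rangle_{L^2}\ge 0$, which gives the clean lower bound $\|f\|_{L^2}^2\ge\alpha^2+1$; and the vertex values and the $L^{2p/(p-1)}$-norm of the global mode are both controlled by the elementary embedding $\|g\|_{L^\infty(\Gamma)}\le L(\Gamma)^{-1/2}\|g\|_{L^2}+L(\Gamma)^{1/2}\|g'\|_{L^2}$ (Proposition \ref{embedding_W_1_2_to_L}), which is where the factor $\bigl(L(\Gamma)^{-1/2}+M(\Gamma)\bigr)^{2/p}$ comes from.

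The genuine gap is exactly the step you flag as the obstacle: the construction of $\phi_N$. You propose to build it by unfolding $\Gamma$ onto an interval via a covering walk and pulling back a cosine, with $\tfrac{P}{2}+\tfrac{3\beta}{2}-1$ appearing as a count of forced half-oscillations. As written this is a heuristic, not a proof: making a pulled-back function single-valued and continuous at vertices in the presence of cycles is precisely the hard part, and it is not established that the walk-based bookkeeping produces this particular constant (the known proof of the bound $\lambda_1^N(\Delta)\le\tfrac{\pi^2}{L(\Gamma)^2}(\tfrac{P}{2}+\tfrac{3\beta}{2}-1)^2$ proceeds by graph surgery, not by unfolding). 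The paper sidesteps the construction entirely: it takes $f^N$ to be an actual normalised first nontrivial Neumann eigenfunction, so that $\|(f^N)'\|_{L^2}^2=\lambda_1^N(\Delta)$ by definition, and then cites \cite[Theorem 4.9]{Berkolaiko_2017} for the eigenvalue bound; the required $L^\infty$ and vertex control then follow from Proposition \ref{embedding_W_1_2_to_L} applied to $f^N$ (using $\int_\Gamma f^N\,\mathrm{d}x=0$). Unless you intend to reprove that cited theorem, you should replace your constructed $\phi_N$ by the eigenfunction itself; with that substitution the rest of your argument goes through and matches the paper's.
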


By considering $\alpha_v\to\infty$ for all $v\in V$, we obtain a universal upper bound for the principal eigenvalue as follows:

\begin{theorem} \label{main_theorem_2}
	Let $\Gamma$ be a metric graph and $p\in[1,\infty]$. Then, for any $q \in L^p(\Gamma)$, any coupling strengths $\{\alpha_v \in \mbb{R}\cup\{\infty\} : v \in V\}$, one has
	\begin{equation}
		\lambda_1(H_q)\le \left(\frac{2}{\ell_{\max}}\right)^{1/p}\|q_+\|_{L^p(\Gamma)}+\left(\frac{\pi}{\ell_{\max}}\right)^2.
	\end{equation}
	Moreover,
	\begin{equation}\label{lambda_1_general}
		\lambda_1(H_q)\le \left(\frac{2|E|}{L(\Gamma)}\right)^{1/p}\|q_+\|_{L^p(\Gamma)}+\left(\frac{\pi|E|}{L(\Gamma)}\right)^2.
	\end{equation}
\end{theorem}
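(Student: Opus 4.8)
The plan is to prove Theorem~\ref{main_theorem_2} by the variational (min-max) characterisation of the principal eigenvalue, using a carefully chosen test function supported on a single edge. Recall that
\[
\lambda_1(H_q)=\inf_{\substack{f\in H^1(\Gamma)\\ f\neq 0}} \frac{\int_\Gamma |f'|^2\df x+\int_\Gamma q|f|^2\df x+\sum_{v\in V}\alpha_v|f(v)|^2}{\int_\Gamma |f|^2\df x},
\]
where the form domain incorporates the continuity condition and (for vertices with $\alpha_v=\infty$) the Dirichlet condition $f(v)=0$. The key observation is that for the \emph{upper} bound we are free to pick any single admissible test function, and the cleverest choice is one concentrated on the longest edge $e_{\max}$ of length $\ell_{\max}$: take $f$ to be the first Dirichlet eigenfunction of the interval $[0,\ell_{\max}]$, namely $f(x)=\sin(\pi x/\ell_{\max})$ on $e_{\max}$ and $f\equiv 0$ on every other edge. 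Since $f$ vanishes at both endpoints of $e_{\max}$, it is continuous on $\Gamma$, it satisfies any Dirichlet vertex condition automatically, and crucially \emph{every vertex term $\alpha_v|f(v)|^2$ vanishes}—this is exactly what makes the bound independent of the coupling strengths and answers Rohleder--Seifert.

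With this test function the Rayleigh quotient splits cleanly. The kinetic term gives $\int |f'|^2=(\pi/\ell_{\max})^2\int |f|^2$, contributing exactly $(\pi/\ell_{\max})^2$, which produces the second term in the bound. For the potential term I would bound
\[
\frac{\int_{e_{\max}} q|f|^2\df x}{\int_{e_{\max}}|f|^2\df x}\le \frac{\int_{e_{\max}} q_+|f|^2\df x}{\int_{e_{\max}}|f|^2\df x},
\]
using $q\le q_+$, and then apply Hölder's inequality with exponents $p$ and $p'$ to the numerator: $\int q_+|f|^2\le \|q_+\|_{L^p}\,\big\||f|^2\big\|_{L^{p'}(e_{\max})}$. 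The ratio $\big\||f|^2\big\|_{L^{p'}}/\int|f|^2$ then reduces to a one-variable computation about the normalised function $\sin^2(\pi x/\ell_{\max})$ on the interval; I expect the factor $(2/\ell_{\max})^{1/p}$ to emerge from estimating $\|\sin^2\|_{L^{p'}}/\|\sin^2\|_{L^1}$ uniformly in $p'$, most transparently by bounding $\sin^2\le 1$ pointwise so that the $L^{p'}$ norm is controlled by $(\ell_{\max})^{1/p'}$ while the $L^1$ norm equals $\ell_{\max}/2$.

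The second inequality~\eqref{lambda_1_general} then follows from the first by the elementary geometric fact that the longest edge is at least the average edge length: $\ell_{\max}\ge L(\Gamma)/|E|$, so $1/\ell_{\max}\le |E|/L(\Gamma)$. Substituting this into both terms of the first bound immediately yields the stated form. The main obstacle I anticipate is pinning down the constant in the Hölder step so that it holds \emph{uniformly across all} $p\in[1,\infty]$ with the clean exponent $1/p$—in particular verifying the endpoint cases $p=1$ (where $q_+$ is integrated against $\|f\|_\infty^2$ and one needs $\|\sin^2\|_\infty=1$) and $p=\infty$ (where the potential contributes $\|q_+\|_\infty$ directly with no length factor). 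Everything else—the vanishing of the vertex terms and the kinetic computation—is immediate from the choice of test function.
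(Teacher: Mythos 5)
Your proposal follows essentially the same route as the paper: the trial function is the (paper-normalised) first Dirichlet eigenfunction of the longest edge extended by zero, the vertex terms vanish identically so the bound is coupling-independent, and \eqref{lambda_1_general} follows from $\ell_{\max}\ge L(\Gamma)/|E|$. The only real issue is the one you flag yourself: the ``most transparent'' way you propose to extract the constant, namely $\sin^{2p'}\le 1$ so that $\|\sin^2\|_{L^{p'}(e)}\le \ell_{\max}^{1/p'}$, combined with $\|\sin^2\|_{L^1(e)}=\ell_{\max}/2$, yields the ratio $2\,\ell_{\max}^{-1/p}$ rather than the claimed $(2/\ell_{\max})^{1/p}=2^{1/p}\ell_{\max}^{-1/p}$; for $p\in(1,\infty)$ this loses a factor $2^{1-1/p}$ and so does not prove the stated inequality. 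The fix is immediate: either observe $\sin^{2p'}\le\sin^{2}$ (since $p'\ge 1$), giving $\|\sin^2\|_{L^{p'}(e)}\le(\ell_{\max}/2)^{1/p'}$ and hence exactly $(2/\ell_{\max})^{1/p}$, or use the interpolation bound
\begin{equation}
\|f^2\|_{L^{p/(p-1)}(\Gamma)}\le \|f\|_{L^2(\Gamma)}^{2-2/p}\,\|f\|_{L^\infty(\Gamma)}^{2/p},
\end{equation}
which is precisely what the paper packages as Proposition \ref{technical_lemma} before specialising to this trial function. With that correction your argument, including both endpoint cases $p=1$ and $p=\infty$, goes through and coincides with the paper's proof.
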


Notice that for the case where $q\equiv 0$ and $\alpha>0$, this was proved in \cite[Proposition 7.4]{survey_quantum_graph} by comparing eigenvalues on metric graphs to eigenvalues on a flower graph with the same edge lengths.

Similarly, we also obtain bounds for higher eigenvalues.

\begin{theorem}\label{main_theorem_3}
    Let $\Gamma$ be a connected metric graph with Betti number $\beta$, the length of the shortest edge $\ell_{\min}$ and $P$ pendants. Suppose further that $\Gamma$ is not a cycle. For each $k\ge 2$, we define
    \begin{equation}
        M_k(\Gamma):=\frac{\pi}{\sqrt{L(\Gamma)}}\left(k-2+\frac{P}{2}+\frac{3\beta}{2}\right).
    \end{equation}
    Let $p\in[1,\infty]$ and $H_q$ be the Schr\"odinger operator with potential $q\in L^p(\Gamma)$ and coupling strengths $\alpha_v\ge 0$ for all $v\in V$ and $k\ge 2$. Then, for all sufficient large $\alpha$, we have
    \begin{equation}
        \lambda_k(H_q)\le \left[\|q_+\|_{L^p(\Gamma)}k^{1/p}\left(\frac{2}{\ell_{\min}}\right)^{1/p}+\frac{\pi^2}{L(\Gamma)^2}(k-1+|E|)^2\right]\cdot \left[1+O\left(\frac{M_k(\Gamma)}{\alpha}(\|q_+\|_{L^p(\Gamma)}+1)\right)\right],
    \end{equation}
    and for all sufficient small $\alpha$, we have
     \begin{equation}\begin{aligned}
        \lambda_k(H_q)&\le \left[\|q_+\|_{L^p(\Gamma)}k^{1/p} M_k(\Gamma)^{2/p}+\frac{M_k(\Gamma)^2}{L(\Gamma)}\right]\cdot\left[1+O\left(\alpha M_k(\Gamma)(\|q_+\|_{L^p(\Gamma)}+1)\right)\right]
        \end{aligned}
        \end{equation}
\end{theorem}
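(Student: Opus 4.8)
The plan is to use the min--max characterisation of $\lambda_k(H_q)$,
\[
\lambda_k(H_q)=\min_{\substack{S\subseteq\mcal H\\ \dim S=k}}\ \max_{0\neq f\in S}\frac{\mcal Q_\alpha[f]}{\|f\|_{L^2(\Gamma)}^2},\qquad \mcal Q_\alpha[f]=\int_\Gamma|f'|^2\df x+\int_\Gamma q|f|^2\df x+\sum_{v\in V}\alpha_v|f(v)|^2,
\]
taken over the form domain $\mcal H$ of continuous $H^1$-functions, and to build a $k$-dimensional test space $S$ adapted to each coupling regime. For large $\alpha$ I would let $S$ be the span of the $k$ lowest eigenfunctions of the Laplacian with Dirichlet conditions at \emph{every} vertex; since $\Gamma$ then decouples into the intervals $\{[0,\ell_e]\}_{e\in E}$, these are explicit half-sine bumps supported on single edges and vanishing at all vertices, so the coupling term vanishes identically on $S$. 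For small $\alpha$ I would instead span $S$ by $k$ test functions modelled on the standard (Kirchhoff) Laplacian, which are continuous but in general nonzero at the vertices; this is the $k$-th-eigenvalue elaboration of the construction behind Theorem~\ref{main_theorem_1}.

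The kinetic part then reduces to the two spectral estimates $\lambda_k^{\mathrm D}(\Gamma)\le\frac{\pi^2}{L(\Gamma)^2}(k-1+|E|)^2$ and $\lambda_k^{\mathrm N}(\Gamma)\le M_k(\Gamma)^2/L(\Gamma)$ for the $k$-th Dirichlet and Kirchhoff eigenvalues, since for $f\in S$ a combination of the orthonormalised eigenfunctions with eigenvalues $\le\lambda_k^\bullet$ one has $\int_\Gamma|f'|^2\le\lambda_k^\bullet\|f\|_{L^2}^2$. The Dirichlet estimate is immediate from the Weyl count $N_{\mathrm D}(\lambda)=\sum_{e\in E}\lfloor\ell_e\sqrt\lambda/\pi\rfloor\ge L(\Gamma)\sqrt\lambda/\pi-|E|$ of the decoupled graph; in particular the whole large-$\alpha$ assertion reduces to the $\lambda_k$-analogue of the argument behind Theorem~\ref{main_theorem_2}, the stated factor $1+O(M_k/\alpha)$ being a harmless weakening of an $\alpha$-independent bound. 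The Kirchhoff estimate, carrying the topological constant $\tfrac P2+\tfrac{3\beta}2$, is the heart of the matter: one natural route is to double every edge into an Eulerian circuit of length $2L(\Gamma)$, descend the low even Fourier modes on that circuit back to $\Gamma$, and count how each independent cycle and each degree-one vertex suppresses the number of admissible low modes, contributing $\tfrac{3\beta}2$ and $\tfrac P2$ respectively.

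The potential is handled uniformly. By Hölder and the interpolation $\|f\|_{L^{2p'}}^2\le\|f\|_{L^2}^{2/p'}\|f\|_{L^\infty}^{2/p}$,
\[
\frac{\int_\Gamma q|f|^2}{\|f\|_{L^2}^2}\le\|q_+\|_{L^p(\Gamma)}\left(\frac{\|f\|_{L^\infty}^2}{\|f\|_{L^2}^2}\right)^{1/p},
\]
and Cauchy--Schwarz over the $k$ orthonormal basis functions $\psi_i$ gives $\|f\|_{L^\infty}^2/\|f\|_{L^2}^2\le k\max_i\|\psi_i\|_{L^\infty}^2$. The bounds $\|\psi_i\|_{L^\infty}^2\le 2/\ell_{\min}$ for the Dirichlet sines, and $\|\psi_i\|_{L^\infty}^2\lesssim M_k(\Gamma)^2$ from the one-dimensional Sobolev inequality applied to the Kirchhoff modes, then produce the factors $k^{1/p}(2/\ell_{\min})^{1/p}$ and $k^{1/p}M_k(\Gamma)^{2/p}$. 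For the coupling term there is nothing to do in the large-$\alpha$ regime, where it vanishes; in the small-$\alpha$ regime it is positive and, estimated through the vertex values of the Kirchhoff test functions and normalised by the main term, condenses into the multiplicative correction $1+O(\alpha M_k(\Gamma)(1+\|q_+\|_{L^p}))$.

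I expect the hardest step to be the Kirchhoff construction above: producing $k$ continuous test functions that simultaneously realise the sharp topological constant $\tfrac P2+\tfrac{3\beta}2$ in the kinetic bound, respect the continuity and derivative conditions, and carry the sup-norm control demanded by the potential and coupling terms, with the pendant and cycle contributions cleanly separated. Propagating the coupling contribution through the Rayleigh quotient so that it collapses to the stated factors $1+O(\alpha M_k(1+\|q_+\|_{L^p}))$ and $1+O(\tfrac{M_k}{\alpha}(1+\|q_+\|_{L^p}))$---in particular tracking the potential--coupling interaction responsible for the $(1+\|q_+\|_{L^p})$---is the remaining delicate bookkeeping, for which the case $k=1$ of Theorem~\ref{main_theorem_1} serves as the template.
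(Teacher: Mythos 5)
Your proposal is sound and reaches the stated bounds, but it organises the trial spaces differently from the paper. You use two separate $k$-dimensional test spaces: the span of the first $k$ edgewise Dirichlet sine modes for large $\alpha$ (so the vertex term vanishes identically and, as you correctly note, the large-$\alpha$ claim reduces to the $\alpha$-independent bound of Theorem \ref{main_theorem_4}), and the span of the first $k$ Kirchhoff eigenfunctions for small $\alpha$. The paper instead runs a single interpolating family $f_j=\alpha f_j^D+f_j^N$ for all $\alpha\ge 0$, which forces it to prove linear independence of the $f_j$ (a step you avoid entirely, since each of your bases is $L^2$-orthonormal) but in return yields one explicit bound with coefficients $\tfrac{\alpha(\alpha+1)}{(\alpha-1)^2}$, $\tfrac{\alpha+1}{(\alpha-1)^2}$, $\tfrac{\alpha}{(\alpha-1)^2}$ valid uniformly in $\alpha$, from which both asymptotic regimes are read off. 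The remaining ingredients coincide: H\"older for the potential (Proposition \ref{technical_lemma}), the Sobolev-type $L^\infty$ estimate (Proposition \ref{embedding_W_1_2_to_L}), the Weyl count for the Dirichlet eigenvalues (Proposition \ref{bound_Dirichlet}), and the Cauchy--Schwarz factor $k$ in the sup-norm bound. One correction of emphasis: the Kirchhoff estimate $\lambda_k^N(\Delta)\le\frac{\pi^2}{L(\Gamma)^2}\left(k-2+\frac P2+\frac{3\beta}2\right)^2$, which you flag as the heart of the matter and propose to reprove via an Eulerian doubling of the graph, is not proved in the paper at all --- it is cited as \cite[Theorem 4.9]{Berkolaiko_2017}, and this is precisely where the hypotheses that $\Gamma$ be connected and not a cycle enter. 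Your doubling sketch is far too thin to substitute for that theorem, so cite it rather than attempt it; with that citation in place, your two-trial-space argument closes, modulo the same bookkeeping the paper itself leaves inside the $O(\cdot)$ factors.
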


\begin{theorem}\label{main_theorem_4}
	Let $\Gamma$ be a metric graph and $p\in[1,\infty]$. Then, for any $q \in L^p(\Gamma)$, and any coupling strengths $\{\alpha_v \in \mbb R\cup\{\infty\} : v \in V\}$, one has
	\begin{equation}\label{bound_by_shortest_edge}
		\lambda_k(H_q)\le \left(\frac{2k}{\ell_{\min}}\right)^{1/p}\|q_+\|_{L^p(\Gamma)}+\frac{\pi^2}{L(\Gamma)^2}(k-1+|E|)^2,\quad\forall k\in\mbb{N}.
	\end{equation}
\end{theorem}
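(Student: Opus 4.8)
\emph{Proof strategy.} The plan is to obtain this universal bound from a single application of the Courant--Fischer min-max principle, using as trial functions the Dirichlet eigenfunctions of the Laplacian on the individual edges. The decisive observation is that every such function vanishes at every vertex, so the vertex term $\sum_{v}\alpha_v|f(v)|^2$ in the quadratic form of $H_q$ drops out entirely; the resulting Rayleigh quotient is therefore independent of the coupling strengths, which is precisely what yields a bound valid simultaneously for all $\{\alpha_v\in\mathbb{R}\cup\{\infty\}\}$. Concretely, for each edge $e$ and each $n\ge 1$ let $\phi_{n,e}$ be the function equal to $\sin(n\pi x/\ell_e)$ on $e$ and zero elsewhere; these are mutually $L^2$-orthogonal (disjoint supports across distinct edges, orthogonal modes on a common edge), they lie in the form domain for any choice of coupling, and they satisfy $\int_\Gamma|\phi_{n,e}'|^2=(n^2\pi^2/\ell_e^2)\int_\Gamma|\phi_{n,e}|^2$.

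First I would fix $\Lambda:=\pi^2(k-1+|E|)^2/L(\Gamma)^2$ and count the modes $\phi_{n,e}$ of energy at most $\Lambda$. On edge $e$ this count is $\lfloor \ell_e\sqrt{\Lambda}/\pi\rfloor$, so the total is $\sum_{e}\lfloor \ell_e\sqrt{\Lambda}/\pi\rfloor > \sum_e(\ell_e\sqrt{\Lambda}/\pi-1)=\sqrt{\Lambda}\,L(\Gamma)/\pi-|E|=k-1$, hence at least $k$. Let $S$ be the span of $k$ of these low-energy modes, with $L^2$-orthonormal basis $\{\psi_j\}_{j=1}^k$ consisting of rescaled $\phi_{n,e}$. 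By orthogonality, every $f=\sum_j c_j\psi_j\in S$ obeys $\int_\Gamma|f'|^2=\sum_j|c_j|^2\,\mu_j\le \Lambda\|f\|_{L^2}^2$, where $\mu_j\le\Lambda$ denotes the Dirichlet energy of $\psi_j$; this accounts for the second term of the claimed bound.

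The remaining and most delicate step is to control the potential contribution $\int_\Gamma q|f|^2\le\int_\Gamma q_+|f|^2$ uniformly over $S$ for arbitrary $p\in[1,\infty]$. By Hölder it is at most $\|q_+\|_{L^p(\Gamma)}\|f\|_{L^{2p'}(\Gamma)}^2$ with $p'$ the conjugate exponent, and interpolation gives $\|f\|_{L^{2p'}}^2\le\|f\|_{L^2}^{2(1-1/p)}\|f\|_{L^\infty}^{2/p}$; so the whole matter reduces to the $L^\infty$--$L^2$ comparison $\|f\|_{L^\infty}^2\le (2k/\ell_{\min})\|f\|_{L^2}^2$ on $S$. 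This I would establish by Cauchy--Schwarz over the basis: at a point $x$ inside an edge only the modes supported there contribute, so $|f(x)|\le\sum_j|c_j|\,\|\psi_j\|_{L^\infty}\le\|f\|_{L^2}\big(\sum_j\|\psi_j\|_{L^\infty}^2\big)^{1/2}$, and since each normalized mode satisfies $\|\psi_j\|_{L^\infty}^2\le 2/\ell_{\min}$ the sum over the $k$ basis vectors is at most $2k/\ell_{\min}$. Feeding this back gives $\int_\Gamma q_+|f|^2\le (2k/\ell_{\min})^{1/p}\|q_+\|_{L^p}\|f\|_{L^2}^2$, and adding the two contributions and invoking min-max finishes the proof. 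The points demanding care are the two endpoints $p=1$ and $p=\infty$, where the interpolation degenerates, and the bookkeeping needed to extract the sharp exponent $(k-1+|E|)$ rather than $(k+|E|)$; both are handled by the strict inequality $\lfloor x\rfloor>x-1$ together with the convention $(\,\cdot\,)^{1/\infty}=1$.
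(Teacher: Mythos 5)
Your proposal is correct and follows essentially the same route as the paper: the trial space is the span of $k$ single-edge Dirichlet sine modes (which kill the vertex term for any coupling), the mode-counting argument $\sum_e\lfloor \ell_e\sqrt{\Lambda}/\pi\rfloor\ge k$ is exactly the paper's Proposition~\ref{bound_Dirichlet}, and the $L^\infty$--$L^2$ comparison plus H\"older/interpolation reproduces Proposition~\ref{technical_lemma}. No gaps.
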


For the case $q\equiv 0$ and either $\alpha_v=0$ for all $v\in V$ (Neumann vertex conditions) or $\alpha_v=\infty$ for all $v\in V$ (Dirichlet vertex conditions), upper bounds for eigenvalues have been studied deeply. In particular,  if $\alpha_v=0$ for all $v\in V$, then \cite[Theorem 4.9]{Berkolaiko_2017} implies:
\begin{equation}\label{small_coupling_strengths}
    \lambda_k(H_0)\le \frac{\pi^2}{L(\Gamma)^2}\left(k-2+\frac{P}{2}+\frac{3\beta}{2}\right)^2,\quad \forall k\in\mbb{N},
\end{equation}
and if $\alpha_v=\infty$ for all $v\in V$, then a consequence of \cite[Theorem 4.4]{Spectral_Graphs} implies
\begin{equation}\label{large_coupling_strengths}
    \lambda_k(H_0)\le \frac{\pi^2}{L(\Gamma)^2}(k-1+|E|)^2,\quad \forall k\in\mbb{N}.
\end{equation}
Our main theorems generalise both inequalities \eqref{small_coupling_strengths} and \eqref{large_coupling_strengths}: the bound is near to \eqref{small_coupling_strengths} for small coupling strengths, and the bound ends up losing its dependence on the topology for large $\alpha$. Note that if either $\Gamma$ is a cycle or disconnected, then \cite[Theorem 4.9]{Berkolaiko_2017} fails. Therefore, the conditions that $\Gamma$ must be connected and not be a cycle are required in Theorem \ref{main_theorem_1} and Theorem \ref{main_theorem_3}. However, in Theorem \ref{main_theorem_2} and Theorem \ref{main_theorem_4}, these conditions are not necessary since their proofs use linear combinations of eigenfunctions of the Laplacian with Dirichlet vertex conditions as trial functions, and the eigenfunctions are independent of the connection and topology of metric graphs.

To see the sharpness of our main results, first let us consider the case where $q\equiv 0$. Notice that both \cite[Theorem 4.9]{Berkolaiko_2017} and inequality \eqref{small_coupling_strengths} are sharp (see \cite{Kurasov2018} for the sharpness of \cite[Theorem 4.9]{Berkolaiko_2017}). Moreover, let $\lambda_k^N(H_0)$ and $\lambda_k^D(H_0)$ be the $k$-th eigenvalue of $H_0$ with Neumann and Dirchlet vertex conditions respectively, as $\alpha\to 0$, we have
\begin{equation}
    \lambda_k(H_0)\to\lambda^N_k(H_0),
\end{equation}
and as $\alpha_v\to\infty$ for all $v\in V$, we have
\begin{equation}\label{coupling_to_infty}
    \lambda_k(H_0)\to\lambda_k^D(H_0).
\end{equation}

Therefore, Theorem \ref{main_theorem_1} and Theorem \ref{main_theorem_3} are sharp for both small and large $\alpha$ in the sense that we cannot improve the geometry terms of $\Gamma$. Note that Theorem \ref{main_theorem_2} and Theorem \ref{main_theorem_4} are also sharp by observation \eqref{coupling_to_infty}.

For non-negative constant potentials $q$ and $p=\infty$, then $H_q$ is a shift of the Laplacian. Hence, our main theorems are sharp in the sense that we cannot improve the terms $\|q_+\|_{L^\infty(\Gamma)}$.

We note that other bounds on the eigenvalues are also interesting. For instance, lower bounds on $\lambda_1(H_q)$ were obtained in \cite{Karreskog2015}. For the case $q\equiv 0$ with Neumann vertex conditions, estimates of the first positive eigenvalue were found in \cite{Kennedy2016,Band2017}. Finally, estimates for the difference between Schr\"odinger and Laplacian eigenvalues are provided in \cite{Band2024,Bifulco_2023}, in either cases the estimated difference goes to $\infty$ as $\alpha_v \to \infty$.

\subsection{Acknowledgements}
The author thanks Jean Lagacé for the remarkable comments on the initial versions of the manuscript and Jonathan Rohleder for fruitful discussions. This work is a part of the author's PhD projects, taking place at King's College London and under the supervision of Jean Lagacé and Mikhail Karpukhin.

\section{Notation and definitions}
Let us first recall the definition of metric graphs as given in \cite[page 10]{Spectral_Graphs}:
Let $G=(V,E)$ be a finite discrete graph, and on each edge $e\in E$, we assign a length $\ell_e>0$. Then, each edge $e\in E$ can be viewed as a closed interval $I_e=[x_{u},x_{v}]$, where $u,v\in V$ are the endpoints of $e$ and $x_v-x_u=\ell_e$. Let $\mathbf{V}$ be the collection of all endpoints of $I_e$ for all $e\in E$, we consider a partition of $\mathbf{V}$ into $|V|$ equivalence classes $\mathbf{V}^v$. We define an equivalence relation $\sim$ on $\bigcup_{e\in E}I_e$ as follows:
\begin{equation}
    x\sim y\Longleftrightarrow \exists v\in V: x,y\in \mathbf{V}^v.
\end{equation}
Then, we define the metric graph $\Gamma$ to be the quotient space
\begin{equation}
    \Gamma:={\bigcup_{e\in E} I_e}/{\sim}
\end{equation}
and $\mathbf{V}^v$ to be the vertices of $\Gamma$. For convenience, we refer to $e$ as $I_e=[0,\ell_e]$ for all $e\in E$,  $v$ as $\mathbf{V}^v$ and $V$ as $\mathbf{V}$. In this paper, we only consider finite discrete graphs with finite lengths, so that our metric graphs are compact metric spaces. With this definition, a function $f:\Gamma\to\mbb{R}$ is in fact a collection of functions $f_e:[0,\ell_e]\to\mbb{R}$ such that they agree at the vertices of $\Gamma$.

%A metric graph $\Gamma$ consists of a discrete graph $\Gamma=(V,E)$ and where each edge $e \in E$ is assigned a length $\ell_e$ and treated as an interval $[0,\ell_e]$. In this paper, we only consider finite graphs with finite edge lengths, so that our metric graphs are compact metric spaces. A function $f:\Gamma\to\mbb{R}$ is a collection of functions $\{f_e:[0,\ell_e]\to\mbb{R}\mid e\in E\}$. 

Let $q\in L^1(\Gamma)$ be a potential function and consider the Schr\"odinger operator:
\begin{equation}
	H_q f := (\Delta +q)f=-\dde{f}{x}+qf,
\end{equation}
with domain:
\begin{equation}
	W^{2,2}(\Gamma):=\left\{f\in \bigoplus_{e\in E}W^{2,2}(e): f \text{ is continuous on }\Gamma\right\}.
\end{equation}

As mentioned above, the eigenvalue problem \eqref{eigenvalue_problem} has a discrete spectrum, which forms a non-decreasing sequence accumulating only at infinity. For a Schr\"odinger operator $H_q$ with coupling strengths $\{\alpha_v\in\mbb{R}:v\in V\}$, we denote $\alpha:=\sum_{v\in V}(\alpha_v)_+$ as the sum of all non-negative coupling strengths, and the quadratic form of $H_q$ is given by:
$$h(f):=\int_\Gamma (f')^2 \df x+\sum_{v\in V}\alpha_v f(v)^2+\int_\Gamma f^2 q\df x,$$
with domain:
\begin{equation}
W^{1,2}(\Gamma):=\left\{f\in\bigoplus_{e\in E}W^{1,2}(e): f\text{ is continuous on }\Gamma\right\}.
\end{equation}

The eigenvalues obey the variational characterisation:
\begin{equation}\lambda_k(H_q)=\min_{\substack{F\subset W^{1,2}(\Gamma),\\\dim F=k}}\max_{f\in F\backslash\{0\}}R(f),\quad R(f)=\frac{h(f)}{\int_\Gamma f^2 \df x},\quad \forall k\in\mbb{N}.
\end{equation}

We obtain estimate \eqref{trivial_bound} by using constant functions as trial functions in the variational characterisation. We write $\lambda^N(H_q)$ for the eigenvalues of the Schrödinger operator with Neumann vertex conditions and $\lambda^D(H_q)$ for the eigenvalues of Schr\"odinger with Dirichlet vertex conditions. Note that the Dirichlet vertex conditions can be viewed as $\alpha_v=\infty$ for all $v\in V$,  and the eigenvalue problem \ref{eigenvalue_problem} becomes:
\begin{equation}\label{eigenvalue_problem_Dirichlet}
	\begin{cases}
		H_q f = \lambda f &\text{on edges};\\
        f \text{ is continuous}&\text{on }\Gamma;\\
		f(v)=0 &\text{at } v \in V.
	\end{cases}
\end{equation}
Again, the Dirichlet eigenvalues obey the variational characterisation:
\begin{equation}
	\lambda_k^D(H_q)=\min_{\substack{F\subset W^{1,2}_0(\Gamma),\\\dim F=k}}\max_{f\in F\backslash\{0\}}\frac{\int_\Gamma (f')^2\df x+\int_\Gamma qf^2\df x}{\int_\Gamma f^2 \df x},\quad \forall k\in\mbb{N},
\end{equation}
where $W^{1,2}_0(\Gamma)$ is the space of $W^{1,2}(\Gamma)$ functions that vanish at all vertices.

\section{Upper bounds for eigenvalues}
\subsection{Upper bounds for the principal eigenvalue}

We use linear combinations of Laplacian eigenfunctions with different vertex conditions. First, we introduce some basic inequalities for $W^{1,2}$ functions on quantum graphs:
\begin{proposition}\label{technical_lemma}
	Let $\Gamma$ be a metric graph and $p\in[1,\infty]$. Then, for any Schr\"odinger operator $H_q$ with potential $q\in L^p(\Gamma)$ and coupling strengths $\{\alpha_v\in\mbb{R}:v\in V\}$, we have
	\begin{equation}\label{technical_inequality}
		R(f)\le  \|q_+\|_{L^p(\Gamma)}\left(\frac{\|f\|_{L^\infty(\Gamma)}}{\|f\|_{L^2(\Gamma)}}\right)^{2/p}+\frac{\|f'\|_{L^2(\Gamma)}^2+\alpha\|f\|_{L^\infty(V)}^2}{\|f\|_{L^2(\Gamma)}^2},\quad \forall f\in W^{1,2}(\Gamma)\backslash\{0\}.
	\end{equation}
\end{proposition}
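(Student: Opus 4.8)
The plan is to estimate each of the three summands in the quadratic form
$h(f)=\int_\Gamma (f')^2\df x+\sum_{v\in V}\alpha_v f(v)^2+\int_\Gamma f^2 q\df x$
separately and then divide through by $\|f\|_{L^2(\Gamma)}^2$. The gradient term contributes exactly $\|f'\|_{L^2(\Gamma)}^2$, so nothing is required there, and the whole proof reduces to controlling the vertex term and the potential term.

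For the vertex term I would first discard the vertices with negative coupling: since $f(v)^2\ge 0$, one has $\alpha_v f(v)^2\le (\alpha_v)_+ f(v)^2$. Bounding each $f(v)^2$ by $\|f\|_{L^\infty(V)}^2=\max_{v\in V}f(v)^2$ and summing then gives $\sum_{v\in V}\alpha_v f(v)^2\le \big(\sum_{v\in V}(\alpha_v)_+\big)\|f\|_{L^\infty(V)}^2=\alpha\|f\|_{L^\infty(V)}^2$, which accounts for the second summand on the right-hand side of \eqref{technical_inequality}.

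The potential term is the only step that requires genuine work, though it remains elementary. I would first replace $q$ by $q_+$ using $\int_\Gamma f^2 q\df x\le \int_\Gamma f^2 q_+\df x$, then apply H\"older's inequality with conjugate exponents $p$ and $p'=p/(p-1)$ to obtain $\int_\Gamma f^2 q_+\df x\le \|q_+\|_{L^p(\Gamma)}\,\|f\|_{L^{2p'}(\Gamma)}^2$. Next I would interpolate $\|f\|_{L^{2p'}(\Gamma)}$ between $L^2$ and $L^\infty$ via the elementary estimate $\int_\Gamma |f|^r\df x\le \|f\|_{L^\infty(\Gamma)}^{r-2}\|f\|_{L^2(\Gamma)}^2$ valid for $r\ge 2$, which with $r=2p'$ and $1-1/p'=1/p$ yields $\|f\|_{L^{2p'}(\Gamma)}^2\le \|f\|_{L^\infty(\Gamma)}^{2/p}\|f\|_{L^2(\Gamma)}^{2-2/p}$. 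Dividing by $\|f\|_{L^2(\Gamma)}^2$ produces precisely the factor $\big(\|f\|_{L^\infty(\Gamma)}/\|f\|_{L^2(\Gamma)}\big)^{2/p}$ multiplying $\|q_+\|_{L^p(\Gamma)}$. The endpoint cases are checked directly: for $p=\infty$ the exponent $2/p$ vanishes and the bound collapses to $\|q_+\|_{L^\infty(\Gamma)}$, while for $p=1$ one has $p'=\infty$ so that $\|f\|_{L^{2p'}(\Gamma)}^2=\|f\|_{L^\infty(\Gamma)}^2$ and the exponent $2/p=2$. Summing the three estimates and dividing by $\|f\|_{L^2(\Gamma)}^2$ then gives \eqref{technical_inequality}.
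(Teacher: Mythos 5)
Your proof is correct and follows essentially the same route as the paper: H\"older's inequality with exponents $p$ and $p/(p-1)$ followed by the $L^2$--$L^\infty$ interpolation of $\|f\|_{L^{2p/(p-1)}}$ for the potential term, and a uniform bound of $f(v)^2$ by $\|f\|_{L^\infty(V)}^2$ (keeping only the nonnegative couplings, consistent with the paper's convention $\alpha=\sum_{v}(\alpha_v)_+$) for the vertex term. Your explicit treatment of the endpoint cases $p=1$ and $p=\infty$ is a minor addition the paper leaves implicit.
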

\begin{proof}
	By H\"older's inequality, we have:
	\begin{equation}
		\int_\Gamma f^2 q\df x\le \int_\Gamma f^2q_+\df x\le \|q_+\|_{L^p(\Gamma)}\|f^2\|_{L^{p/(p-1)}(\Gamma)}\le\|q_+\|_{L^p(\Gamma)}\|f\|^{2-2/p}_{L^2(\Gamma)}\|f\|^{2/p}_{L^\infty(\Gamma)},
	\end{equation}
	for all $f\in W^{1,2}(\Gamma)$. To finish the proof, we bound $f(v)$ uniformly by $\|f\|_{L^\infty(V)}$.
\end{proof}

\begin{proposition}\label{embedding_W_1_2_to_L}
    Let $\Gamma$ be a connected metric graph, then
    \begin{equation}\label{embedding_W_1_2_to_L_1}
        \|f\|_{L^\infty(\Gamma)}\le\frac{1}{\sqrt{L(\Gamma)}}\cdot \|f\|_{L^2(\Gamma)}+\sqrt{L(\Gamma)}\cdot \|f'\|_{L^2(\Gamma)},\quad \forall f\in W^{1,2}(\Gamma).
    \end{equation}
    Moreover, if $f\in W^{1,2}(\Gamma)$ and $\int_\Gamma f\df x=0$, then
    \begin{equation}\label{embedding_W_1_2_to_L_2}
        \|f\|_{L^\infty(\Gamma)}\le \sqrt{L(\Gamma)}\cdot \|f'\|_{L^2(\Gamma)}
    \end{equation}
\end{proposition}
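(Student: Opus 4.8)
The plan is to combine the fundamental theorem of calculus along a path in $\Gamma$ with the Cauchy--Schwarz inequality. Since $\Gamma$ is a compact metric space and $f$ is continuous, $|f|$ attains its maximum at some point $x_0\in\Gamma$; write $M:=\|f\|_{L^\infty(\Gamma)}=|f(x_0)|$. The strategy is to select a reference point $x_1$ at which $|f|$ is controlled, join $x_0$ to $x_1$ by a \emph{simple} path $\gamma\subset\Gamma$ (one traversing each edge at most once), and estimate the increment of $f$ along $\gamma$.

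First I would record the basic path estimate, which drives both inequalities. Connectedness of $\Gamma$ guarantees a simple path $\gamma$ from $x_1$ to $x_0$; being simple, it has length $\ell_\gamma\le L(\Gamma)$ and satisfies $\|f'\|_{L^2(\gamma)}\le\|f'\|_{L^2(\Gamma)}$. Because each $f_e\in W^{1,2}(e)$ is absolutely continuous and $f$ is continuous at every vertex, the restriction of $f$ to $\gamma$ is absolutely continuous, so the fundamental theorem of calculus and Cauchy--Schwarz give
\begin{equation}
M-|f(x_1)|\le|f(x_0)-f(x_1)|=\left|\int_\gamma f'\df t\right|\le\int_\gamma|f'|\df t\le\sqrt{\ell_\gamma}\,\|f'\|_{L^2(\gamma)}\le\sqrt{L(\Gamma)}\,\|f'\|_{L^2(\Gamma)}.
\end{equation}

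To obtain \eqref{embedding_W_1_2_to_L_1}, I would choose $x_1$ so that $f(x_1)^2$ does not exceed the average of $f^2$ over $\Gamma$: since that average equals $\|f\|_{L^2(\Gamma)}^2/L(\Gamma)$ and $f^2$ is continuous, there is $x_1\in\Gamma$ with $|f(x_1)|\le\|f\|_{L^2(\Gamma)}/\sqrt{L(\Gamma)}$, and substituting into the path estimate yields the first claim. For \eqref{embedding_W_1_2_to_L_2}, the hypothesis $\int_\Gamma f\df x=0$ forces $f$ to change sign unless $f\equiv0$ (in which case both sides vanish); hence, by the intermediate value theorem along a path, $f$ has a zero $x_1\in\Gamma$, and with this choice the term $|f(x_1)|$ disappears, leaving exactly the second claim.

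The step requiring the most care is the path argument itself: one must justify that a simple path can always be extracted so that both its length and the $L^2$-norm of $f'$ along it are dominated by the corresponding quantities on all of $\Gamma$, and that the fundamental theorem of calculus remains valid as the path crosses vertices. The latter is precisely where continuity of $f$ at the vertices---built into the definition of $W^{1,2}(\Gamma)$---is essential, since it allows the absolutely continuous pieces $f_e$ to be concatenated into a single absolutely continuous function along $\gamma$, with the path-derivative matching $\pm f_e'$ according to the orientation but leaving $|f'|$ unchanged. Everything else (the choice of $x_1$ and the Cauchy--Schwarz estimate) is routine.
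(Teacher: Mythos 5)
Your proof is correct and follows essentially the same route as the paper: the fundamental theorem of calculus along a path from the maximum point to a reference point, combined with Cauchy--Schwarz. The only cosmetic difference is that the paper picks $x_1$ where $f$ equals its mean value $\frac{1}{L(\Gamma)}\int_\Gamma f\df x$ (which handles both inequalities with one choice), while you pick $x_1$ where $f^2$ is at most its average for the first inequality and a zero of $f$ for the second; both yield the identical bounds.
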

\begin{proof}
    Since $f\in W^{1,2}(\Gamma)$, $f$ is continuous on $\Gamma$. Let $x_0,x_1\in \Gamma$ be such that
    \begin{equation}
        |f(x_0)|=\|f\|_{L^\infty(\Gamma)},\quad f(x_1)=\frac{1}{L(\Gamma)}\int_\Gamma f\df x.
    \end{equation}
    Let $\mathcal{P}$ be a path in $\Gamma$ whose endpoints are $x_0$ and $x_1$. Then, we have:
    \begin{equation}
        \begin{aligned}
            \|f\|_{L^\infty(\Gamma)}=\left|f(x_1)-\int_\mathcal{P} f'\df x\right|&\le\frac{1}{L(\Gamma)}\int_\Gamma |f|\df x+\int_\Gamma |f'|\df x\\
            &\le  \frac{1}{\sqrt{L(\Gamma)}}\cdot \|f\|_{L^2(\Gamma)}+\sqrt{L(\Gamma)}\cdot \|f'\|_{L^2(\Gamma)}
        \end{aligned}
    \end{equation}
    Observe that if $\int_\Gamma f\df x=0$, then $f(x_1)=0$ and the inequality \eqref{embedding_W_1_2_to_L_1} can be improved to the inequality \eqref{embedding_W_1_2_to_L_2}.
\end{proof}

We now prove a general result, which implies directly Theorem \ref{main_theorem_1}:

\begin{theorem}\label{general_main_theorem_1}
	Let $\Gamma$ be a connected metric graph with Betti number $\beta$, the length of the longest edge $\ell_{\max}$ and $P$ pendants. Recall that the constant $M(\Gamma)$ is given as follows:
    \begin{equation}
        M(\Gamma):=\frac{\pi}{\sqrt{L(\Gamma)}}\left(\frac{P}{2}+\frac{3\beta}{2}-1\right).
    \end{equation}
    Let $p\in[1,\infty]$, then, for any Schr\"odinger operator $H_q$ on $\Gamma$ with potential $q\in L^p(\Gamma)$ and coupling strengths 
	$\{\alpha_v\ge 0:v\in V\}$, one has: 
	   \begin{equation}
        \begin{aligned}
            \lambda_1(H_q)
            &\le\|q_+\|_{L^p(\Gamma)}\left(\frac{\alpha(\alpha+1)}{\alpha^2+1}\cdot \frac{2}{\ell_{\max}}+\frac{\alpha+1}{\alpha^2+1}\left(\frac{1}{\sqrt{L(\Gamma)}}+M(\Gamma)\right)^2\right)^{1/p}\\
            &\quad +\frac{\alpha(\alpha+1)}{\alpha^2+1}\cdot \frac{\pi^2}{\ell_{\max}^2}+\frac{\alpha+1}{\alpha^2+1}\cdot \frac{M(\Gamma)^2}{L(\Gamma)}+\frac{\alpha}{\alpha^2+1}\left(\frac{1}{\sqrt{L(\Gamma)}}+M(\Gamma)\right)^2
        \end{aligned}
    \end{equation}
    \end{theorem}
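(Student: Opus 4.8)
The plan is to apply the variational characterisation $\lambda_1(H_q)\le R(f)$ to a single well-chosen trial function $f$ and then bound $R(f)$ by Proposition \ref{technical_lemma}. This reduces the whole problem to estimating the four quantities $\|f\|_{L^2(\Gamma)}$, $\|f'\|_{L^2(\Gamma)}$, $\|f\|_{L^\infty(\Gamma)}$ and $\|f\|_{L^\infty(V)}$. The guiding idea is that $f$ should interpolate between a Dirichlet profile, which dominates as $\alpha\to\infty$ and produces the $\pi^2/\ell_{\max}^2$ and $2/\ell_{\max}$ terms, and a Neumann profile, which dominates as $\alpha\to 0$ and produces the $M(\Gamma)^2/L(\Gamma)$ and $(1/\sqrt{L(\Gamma)}+M(\Gamma))^2$ terms; weighting the two profiles by $\alpha$ and $1$ respectively is what makes the rational coefficients $\tfrac{\alpha(\alpha+1)}{\alpha^2+1}$, $\tfrac{\alpha+1}{\alpha^2+1}$, $\tfrac{\alpha}{\alpha^2+1}$ appear.

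Concretely, I would set $f=\alpha\,\phi+\psi$. Here $\phi$ is the $L^2$-normalised first Dirichlet eigenfunction supported on the longest edge (a half-sine on $[0,\ell_{\max}]$, extended by zero); it is continuous on $\Gamma$, vanishes at every vertex, and satisfies $\|\phi'\|_{L^2(\Gamma)}^2=\pi^2/\ell_{\max}^2$ and $\|\phi\|_{L^\infty(\Gamma)}^2=2/\ell_{\max}$. The function $\psi$ is an $L^2$-normalised \emph{Neumann-type} function whose gradient Rayleigh quotient satisfies $\|\psi'\|_{L^2(\Gamma)}^2\le M(\Gamma)^2/L(\Gamma)$; the existence of such a $\psi$ is precisely inequality \eqref{small_coupling_strengths} (equivalently, the trial-function construction behind \cite[Theorem 4.9]{Berkolaiko_2017}), and this is the step that uses that $\Gamma$ is connected and not a cycle. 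Feeding the gradient bound into Proposition \ref{embedding_W_1_2_to_L}, inequality \eqref{embedding_W_1_2_to_L_1}, gives $\|\psi\|_{L^\infty(\Gamma)}\le 1/\sqrt{L(\Gamma)}+M(\Gamma)$, and hence the same bound for $\|\psi\|_{L^\infty(V)}$. Finally I would fix the sign of $\psi$ so that $\langle\phi,\psi\rangle_{L^2(\Gamma)}\ge 0$.

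With this choice the estimates are mechanical. Because $\phi$ vanishes at the vertices, $f(v)=\psi(v)$, so $\|f\|_{L^\infty(V)}\le 1/\sqrt{L(\Gamma)}+M(\Gamma)$, whereas $\|f\|_{L^\infty(\Gamma)}$ receives contributions from both $\phi$ and $\psi$. The sign normalisation yields $\|f\|_{L^2(\Gamma)}^2=\alpha^2+2\alpha\langle\phi,\psi\rangle+1\ge \alpha^2+1$. Applying the triangle inequality to $\|f\|_{L^\infty(\Gamma)}$ and to $\|f'\|_{L^2(\Gamma)}$ and then bounding the resulting cross terms by $2ab\le a^2+b^2$ gives $\|f\|_{L^\infty(\Gamma)}^2\le \alpha(\alpha+1)\tfrac{2}{\ell_{\max}}+(\alpha+1)(1/\sqrt{L(\Gamma)}+M(\Gamma))^2$ and $\|f'\|_{L^2(\Gamma)}^2\le \alpha(\alpha+1)\tfrac{\pi^2}{\ell_{\max}^2}+(\alpha+1)\tfrac{M(\Gamma)^2}{L(\Gamma)}$. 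Dividing these, together with $\alpha\|f\|_{L^\infty(V)}^2\le\alpha(1/\sqrt{L(\Gamma)}+M(\Gamma))^2$, by $\|f\|_{L^2(\Gamma)}^2\ge\alpha^2+1$ and substituting into Proposition \ref{technical_lemma} reproduces the stated inequality term by term, the $\|q_+\|$-factor carrying $(\|f\|_{L^\infty(\Gamma)}/\|f\|_{L^2(\Gamma)})^2$ and the remaining factor carrying $(\|f'\|_{L^2(\Gamma)}^2+\alpha\|f\|_{L^\infty(V)}^2)/\|f\|_{L^2(\Gamma)}^2$.

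The main obstacle is the construction of $\psi$: obtaining a function whose gradient Rayleigh quotient is controlled by the sharp geometric quantity $M(\Gamma)^2/L(\Gamma)$ is exactly the Neumann eigenvalue estimate \eqref{small_coupling_strengths}, and it is there that the connectivity and non-cycle hypotheses are genuinely needed. The only other delicate point is bookkeeping: one must steer the slack in the triangle-inequality and $2ab\le a^2+b^2$ steps so that the Dirichlet and Neumann pieces assemble into exactly the rational coefficients above, and must keep $\|f\|_{L^\infty(V)}$ (to which only $\psi$ contributes) separate from $\|f\|_{L^\infty(\Gamma)}$ (to which both contribute), since these ultimately carry the different weights $\tfrac{\alpha}{\alpha^2+1}$ and $\tfrac{\alpha+1}{\alpha^2+1}$.
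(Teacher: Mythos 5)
Your proposal is correct and follows essentially the same route as the paper: the trial function $\alpha f^D+f^N$ with $f^D$ the half-sine on the longest edge and $f^N$ a normalised first Neumann eigenfunction (with $\langle f^D,f^N\rangle_{L^2(\Gamma)}\ge 0$), the lower bound $\|f\|_{L^2(\Gamma)}^2\ge\alpha^2+1$, the $2ab\le a^2+b^2$ bookkeeping producing the coefficients $\alpha(\alpha+1)$ and $\alpha+1$, and the combination of \cite[Theorem 4.9]{Berkolaiko_2017}, Proposition \ref{embedding_W_1_2_to_L} and Proposition \ref{technical_lemma}. The only cosmetic difference is that you phrase $\psi$ as any function realising the Neumann bound rather than the eigenfunction itself, which changes nothing.
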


\begin{proof}
	Let $e$ be the longest edge and $f^D:\Gamma\to \mbb{R}$ be given as follows:
	\begin{equation}\label{Dirichlet_trial}
		f^D(x):=	\begin{cases}
			\sqrt{\frac{2}{\ell_e}}\sin \left(\frac{\pi x}{\ell_e}\right),&\text{if }x\in e;\\0, & \text{otherwise},
		\end{cases}
	\end{equation}
Let $f^N$ be a normalised $\lambda^{N}_1(\Delta)$-eigenfunction, then by \cite[Theorem 4.9]{Berkolaiko_2017}, we have:
	\begin{equation}
		\|(f^N)'\|_{L^2(\Gamma)}^2=\lambda^N_1(\Delta)\le \frac{\pi^2}{L(\Gamma)^2}\left(\frac{P}{2}+\frac{3\beta}{2}-1\right)^2=\frac{M(\Gamma)^2}{L(\Gamma)}.
	\end{equation}
    Let $f:\Gamma\to\mbb{R}$ be given as follows:
	\begin{equation}
		f(x):=\alpha f^D(x)+f^N(x),\quad \forall x\in\Gamma.
	\end{equation}
	Note that we can choose $f^N$ in a way such that $\langle f^D,f^N\rangle_{L^2(\Gamma)}\ge 0$. Then,
    \begin{equation}\label{general_1}
        \|f\|^2_{L^2(\Gamma)}=\alpha^2+2\alpha \langle f^D,f^N\rangle_{L^2(\Gamma)}+1\ge \alpha^2+1.
    \end{equation}
    Observe that
    \begin{equation}\label{general_2}
    \begin{aligned}
    \|f'\|^2_{L^2(\Gamma)}&\le\left(\alpha\|(f^D)'\|_{L^2(\Gamma)}+\|(f^N)'\|_{L^2(\Gamma)}\right)^2\\
    &\le\alpha^2\left(\frac{1}{\alpha}+1\right)\lambda_1^D(\Delta)+(\alpha+1)\lambda_1^N(\Delta)\\
    &\le \alpha(\alpha+1)\cdot \frac{\pi^2}{\ell_{\max}^2}+(\alpha+1)\cdot\frac{M(\Gamma)^2}{L(\Gamma)},
    \end{aligned}
    \end{equation}
	and Proposition \ref{embedding_W_1_2_to_L} implies
    \begin{equation}
        \|f^N\|_{L^\infty(\Gamma)}\le \frac{1}{\sqrt{L(\Gamma)}}+M(\Gamma),
    \end{equation}
    so that
	\begin{equation}\label{general_3}
    \begin{aligned}
		\|f\|_{L^\infty(\Gamma)}^2&\le(\alpha\|f^D\|_{L^\infty(\Gamma)}+\|f^N\|_{L^\infty(\Gamma)})^2\\
        &\le \left(\alpha\sqrt{\frac{2}{\ell_{\max}}}+\frac{1}{\sqrt{L(\Gamma)}}+M(\Gamma)\right)^2\\
        &\le \alpha^2\left(\frac{1}{\alpha}+1\right)\frac{2}{\ell_{\max}}+(\alpha+1)\left(\frac{1}{\sqrt{L(\Gamma)}}+M(\Gamma)\right)^2
        \end{aligned}
	\end{equation}
    Since $f^D|_{V}\equiv 0$, we have:
    \begin{equation}\label{general_4}
        \|f\|_{L^\infty(V)}\le \|f^N\|_{L^\infty(V)}\le\frac{1}{\sqrt{L(\Gamma)}}+M(\Gamma).
    \end{equation}
	To finish the proof, we combine the variational characterisation of eigenvalues, Proposition \ref{technical_lemma} and inequalities \eqref{general_1}, \eqref{general_2}, \eqref{general_3} and \eqref{general_4}.
\end{proof}

\begin{proof}[Proof of Theorem \ref{main_theorem_2}]
	We use the function $f^D$ as given \eqref{Dirichlet_trial} as a trial function. Note that
	\begin{equation}
		\|f^D\|^2_{L(\Gamma)}=1,\quad \|(f^D)'\|_{L^2(\Gamma)}^2=\frac{\pi^2}{\ell_{\max}^2},\quad \|f^D\|^2_{L^\infty(\Gamma)}=\frac{2}{\ell_{\max}},
	\end{equation}
	and $f^D(v)=0$ for all $v\in V$. Then, the variational characterisation and Proposition \ref{technical_lemma} yield:
	\begin{equation}
		\begin{aligned}
			\lambda_1(H_q)&\le \|q_+\|_{L^p(\Gamma)}\left(\frac{\|f^D\|_{L^\infty(\Gamma)}}{\|f^D\|_{L^2(\Gamma)}}\right)^{2/p}+\frac{\|(f^D)'\|_{L^2(\Gamma)}^2+\sum_{v\in V}\alpha_v f^D(v)^2}{\|f^D\|_{L^2(\Gamma)}^2}\\
			&=\|q_+\|_{L^p(\Gamma)}\left(\frac{2}{\ell_{\max}}\right)^{1/p}+\left(\frac{\pi}{\ell_{\max}}\right)^2\\
			&\le \|q_+\|_{L^p(\Gamma)}\left(\frac{2|E|}{L(\Gamma)}\right)^{1/p}+\left(\frac{\pi|E|}{L(\Gamma)}\right)^2.
		\end{aligned}
	\end{equation}
\end{proof}

\subsection{Upper bounds for higher eigenvalues}
The method we use to obtain upper bounds for higher eigenvalues is similar: we use $f_j=\alpha f_j^D+f_j^N$ and their linear combinations as trial functions, where $f_j^D,f_j^N$ are $\lambda_j^D(\Delta),\lambda_j^N(\Delta)$ eigenfunctions, respectively. Then, we use known bounds for $\lambda_j^D(\Delta)$ and $\lambda_j^N(\Delta)$, and Proposition \ref{technical_lemma} to obtain upper bounds for eigenvalues of Schr\"odinger operators.

First, we introduce a bound for eigenvalues of Laplacian with Dirichlet vertex conditions. In \cite[Theorem 4.4]{Spectral_Graphs}, Kurasov proved Weyl's law for Dirichlet eigenvalues, in which the proof also implies a one-sided inequality. We repeat his proof to obtain an upper bound for Dirichlet eigenvalues that are independent of the topology of the graph.

\begin{proposition}\label{bound_Dirichlet}
    Let $\Gamma$ be a metric graph, then:
    \begin{equation}
        \lambda_k^D(\Delta)\le \frac{\pi^2}{L(\Gamma)^2}\left(k-1+|E|\right)^2,\quad \forall k\in\mbb{N}.
    \end{equation}
\end{proposition}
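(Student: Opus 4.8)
The plan is to use the variational characterisation of the Dirichlet eigenvalues with $q\equiv 0$, together with a trial space built from the eigenfunctions of the \emph{decoupled} problem, in which each edge is treated as an independent interval with Dirichlet endpoints. On an edge $e$ of length $\ell_e$, identified with $[0,\ell_e]$, the functions
$$\phi_{e,n}(x):=\sin\left(\frac{n\pi x}{\ell_e}\right),\qquad n\in\mbb{N},$$
vanish at both endpoints and therefore extend by zero to elements of $W^{1,2}_0(\Gamma)$; they satisfy $-\phi_{e,n}''=(n\pi/\ell_e)^2\phi_{e,n}$. First I would record that, after $L^2$-normalisation, these functions are pairwise orthogonal both in $L^2(\Gamma)$ (disjoint supports across distinct edges, and orthogonality of distinct sines on a common edge) and in the Dirichlet form $(f,g)\mapsto\int_\Gamma f'g'\df{x}$, the latter by integration by parts with the boundary terms vanishing.

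Next I would fix the threshold $\Lambda:=\frac{\pi^2}{L(\Gamma)^2}(k-1+|E|)^2$ and count the \emph{admissible} functions, namely those whose eigenvalue satisfies $(n\pi/\ell_e)^2\le\Lambda$. On edge $e$ there are exactly $\lfloor \ell_e\sqrt{\Lambda}/\pi\rfloor$ such $n$, so the total count is
$$N(\Lambda)=\sum_{e\in E}\left\lfloor\frac{\ell_e\sqrt{\Lambda}}{\pi}\right\rfloor.$$
Applying the strict bound $\lfloor t\rfloor>t-1$, valid for every real $t$, and summing over the (nonempty) edge set gives
$$N(\Lambda)>\sum_{e\in E}\left(\frac{\ell_e\sqrt{\Lambda}}{\pi}-1\right)=\frac{\sqrt{\Lambda}}{\pi}L(\Gamma)-|E|=(k-1+|E|)-|E|=k-1.$$
Since $N(\Lambda)$ is an integer strictly exceeding $k-1$, we conclude $N(\Lambda)\ge k$. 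This counting step is the only delicate point: the non-strict estimate $\lfloor t\rfloor\ge t-1$ would yield merely $k-1$ admissible functions, and it is precisely the strictness of $\lfloor t\rfloor>t-1$ (together with $|E|\ge 1$) that makes the constant $k-1+|E|$ in the statement the correct one.

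Finally, I would choose any $k$ of these admissible (normalised) functions; their span $F\subset W^{1,2}_0(\Gamma)$ has dimension $k$. For $f=\sum_j c_j\phi_j\in F$, the two orthogonality relations give
$$\int_\Gamma (f')^2\df{x}=\sum_j c_j^2\left(\frac{n_j\pi}{\ell_{e_j}}\right)^2\le\Lambda\sum_j c_j^2=\Lambda\int_\Gamma f^2\df{x},$$
so the Rayleigh quotient is at most $\Lambda$ on all of $F\backslash\{0\}$. The min-max characterisation of $\lambda_k^D(\Delta)$ then forces $\lambda_k^D(\Delta)\le\Lambda$, which is the asserted inequality. Beyond the counting estimate I expect no further obstacle; the remaining content is the routine verification that each $\phi_{e,n}$ lies in $W^{1,2}_0(\Gamma)$ and that the chosen family is doubly orthogonal.
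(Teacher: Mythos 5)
Your proof is correct and rests on the same idea as the paper's: the Dirichlet problem decouples over the edges, and the floor-function count $\sum_{e\in E}\lfloor\ell_e\sqrt{\Lambda}/\pi\rfloor$ produces at least $k$ admissible sine functions below the threshold $\Lambda=\pi^2(k-1+|E|)^2/L(\Gamma)^2$. The only cosmetic difference is that you run the count forward through the min--max principle with an explicit trial space and the strict bound $\lfloor t\rfloor>t-1$, whereas the paper starts from the identity $N(\lambda_k^D,\Gamma)=\sum_{e\in E}N(\lambda_k^D,e)$ and uses that one of the floors is attained exactly; both routes yield the same constant.
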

\begin{proof}
   Observe that:
    \begin{equation}
        \sigma(\Gamma,\Delta)=\bigcup_{e\in E}\sigma(\Gamma,\Delta),
    \end{equation}
    where the union counts multiplicity and $\Delta$ is the Laplacian with Dirichlet vertex conditions. Hence, for all $\lambda\ge 0$, let  $N(\lambda,\Gamma)$ and $N(\lambda,e)$ be the numbers of $\Delta$ eigenvalues that are at most $\lambda$ in $\Gamma$ and $e$ respectively, for all $e\in E$,  we have:
    \begin{equation}
        N(\lambda,\Gamma)=\sum_{e\in E} N(\lambda,e),
    \end{equation}
    Note that:
    \begin{equation}
        \sigma(\Delta,e)=\left\{\frac{\pi^2j^2}{\ell_e^2}:\;j\in\mbb{N}\right\},\quad \forall e\in E,
    \end{equation}
    so that
    \begin{equation}
        N(\lambda,e)=\left\lfloor\frac{\ell_e\sqrt{\lambda}}{\pi}\right\rfloor,\quad \forall e\in E.
    \end{equation}
    Therefore,
    \begin{equation}\label{estimate_lambda_D}
    \begin{aligned}
        k=N(\lambda_k^D(\Delta),\Gamma)&=\sum_{e\in E}\left\lfloor\frac{\ell_e\sqrt{\lambda_k^D(\Delta)}}{\pi}\right\rfloor\ge \frac{L(\Gamma)\sqrt{\lambda_k^D(\Delta)}}{\pi}-|E|+1,
        \end{aligned}
    \end{equation}
    since there exists some edge $e$ such that $\lfloor \sqrt{\lambda_k^D(\Delta)}\ell_e/\pi\rfloor\in\mbb{N}$. To finish the proof, we rewrite the inequality \eqref{estimate_lambda_D}. 
\end{proof}

Again, we prove the following result, which implies directly Theorem \ref{main_theorem_3}.
\begin{theorem}
      Let $\Gamma$ be a connected metric graph with Betti number $\beta$, the length of the shortest edge $\ell_{\min}$ and $P$ pendants. Recall that the constant $M_k(\Gamma)$ is given as follows:
      \begin{equation}
             M_k(\Gamma):=\frac{\pi}{\sqrt{L(\Gamma)}}\left(k-2+\frac{P}{2}+\frac{3\beta}{2}\right),\quad \forall k\ge 2.
      \end{equation}
      Let $p\in[1,\infty]$, then, for any Schr\"odinger operator $H_q$ with potential $q\in L^p(\Gamma)$ and coupling strengths $\{\alpha_v\ge 0:\forall v\in V\}$, and for all $k\ge 2$, one has:
        \begin{equation}
    \begin{aligned}
        \lambda_k(H_q)       &\le \|q_+\|_{L^p(\Gamma)}k^{1/p}\left(\frac{\alpha(\alpha+1)}{(\alpha-1)^2}\cdot {\frac{2}{\ell_{\min}}}+ \frac{\alpha+1}{(\alpha-1)^2}\cdot M_k(\Gamma)^2\right)^{1/p}\\
        &\quad +\frac{\alpha(\alpha+1)}{(\alpha-1)^2}\cdot \frac{\pi^2}{L(\Gamma)^2}(k-1+|E|)^2+\frac{\alpha+1}{(\alpha-1)^2}\cdot \frac{M_k(\Gamma)^2}{L(\Gamma)}+\frac{\alpha }{(\alpha-1)^2}\cdot M_k(\Gamma)^2,
    \end{aligned}
    \end{equation}
\end{theorem}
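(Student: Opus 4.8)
The plan is to run the same variational argument as in the proof of Theorem \ref{general_main_theorem_1}, but now with a $k$-dimensional trial space. I would fix $L^2$-orthonormal eigenfunctions $f_1^D,\dots,f_k^D$ of the Laplacian with Dirichlet vertex conditions, associated with the $k$ smallest eigenvalues $\lambda_1^D(\Delta)\le\cdots\le\lambda_k^D(\Delta)$, and $L^2$-orthonormal eigenfunctions $f_1^N,\dots,f_k^N$ of the Laplacian with Kirchhoff conditions, associated with $\lambda_1^N(\Delta)\le\cdots\le\lambda_k^N(\Delta)$, where $f_1^N\equiv L(\Gamma)^{-1/2}$ is the constant mode. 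Setting $f_j:=\alpha f_j^D+f_j^N$ and $F:=\operatorname{span}\{f_1,\dots,f_k\}$, a general element is $f=\sum_{j=1}^k c_jf_j=\alpha g^D+g^N$ with $g^D=\sum_j c_jf_j^D$, $g^N=\sum_j c_jf_j^N$, and orthonormality of each family gives $\|g^D\|_{L^2(\Gamma)}=\|g^N\|_{L^2(\Gamma)}=\|c\|$, where $\|c\|^2=\sum_j c_j^2$. The bound then follows by feeding $F$ into the min--max characterisation of $\lambda_k(H_q)$ together with Proposition \ref{technical_lemma}, and bounding the three ratios $\|f\|_{L^\infty(\Gamma)}^2/\|f\|_{L^2(\Gamma)}^2$, $\|f'\|_{L^2(\Gamma)}^2/\|f\|_{L^2(\Gamma)}^2$ and $\|f\|_{L^\infty(V)}^2/\|f\|_{L^2(\Gamma)}^2$ uniformly in $c$.

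The crux, and the point where the higher-dimensional case genuinely departs from $k=1$, is the lower bound on $\|f\|_{L^2(\Gamma)}$. Expanding, $\|f\|_{L^2(\Gamma)}^2=(\alpha^2+1)\|c\|^2+2\alpha\langle g^D,g^N\rangle$. In the proof of Theorem \ref{general_main_theorem_1} one chooses the sign of $f^N$ so that the single cross term is nonnegative, yielding the factor $\alpha^2+1$; here we work with an entire subspace and cannot orient all cross terms favourably at once, so I would instead apply Cauchy--Schwarz, $\langle g^D,g^N\rangle\ge-\|g^D\|\,\|g^N\|=-\|c\|^2$, to get $\|f\|_{L^2(\Gamma)}^2\ge(\alpha-1)^2\|c\|^2$. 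This is precisely the source of the denominator $(\alpha-1)^2$ in the claim; it also forces $\alpha\neq1$ (at $\alpha=1$ the estimate must and does degenerate) and, en route, shows the $f_j$ are linearly independent, so $\dim F=k$ and the min--max is legitimate. I expect this indefinite cross term to be the main obstacle, since everything downstream is routine once the $L^2$ floor is secured.

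For the numerators I would argue as in the $k=1$ case. Since the $f_j^D$ and $f_j^N$ are eigenfunctions, integration by parts (boundary terms vanishing by the Dirichlet, respectively Kirchhoff, conditions) gives $\langle(f_i^D)',(f_j^D)'\rangle=\lambda_i^D(\Delta)\delta_{ij}$ and the analogue for the Neumann family, whence $\|(g^D)'\|^2\le\lambda_k^D(\Delta)\|c\|^2$ and $\|(g^N)'\|^2\le\lambda_k^N(\Delta)\|c\|^2$. Applying the elementary inequality $(a+b)^2\le(1+\tfrac{1}{\alpha})a^2+(1+\alpha)b^2$ to $\|f'\|_{L^2(\Gamma)}\le\alpha\|(g^D)'\|+\|(g^N)'\|$ yields $\|f'\|_{L^2(\Gamma)}^2\le\big[\alpha(\alpha+1)\lambda_k^D(\Delta)+(\alpha+1)\lambda_k^N(\Delta)\big]\|c\|^2$. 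The $L^\infty$ term uses the pointwise bounds $\|f_j^D\|_{L^\infty(\Gamma)}\le\sqrt{2/\ell_{\min}}$ (each Dirichlet eigenfunction is a sine on a single edge) and $\|f_j^N\|_{L^\infty(\Gamma)}\le M_k(\Gamma)$ (from Proposition \ref{embedding_W_1_2_to_L}, using $\int_\Gamma f_j^N=0$ for $j\ge2$ with $\lambda_k^N(\Delta)\le M_k(\Gamma)^2/L(\Gamma)$, and handling the constant $f_1^N$ separately); the triangle inequality with $\sum_j|c_j|\le\sqrt{k}\,\|c\|$ then produces the factor $k$ and, after the same splitting, $\|f\|_{L^\infty(\Gamma)}^2\le k\big[\alpha(\alpha+1)\tfrac{2}{\ell_{\min}}+(\alpha+1)M_k(\Gamma)^2\big]\|c\|^2$.

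Finally, the vertex term simplifies because every Dirichlet eigenfunction vanishes at the vertices, so $f(v)=g^N(v)$ and $\|f\|_{L^\infty(V)}\le\|g^N\|_{L^\infty(\Gamma)}$, which I would again estimate via Proposition \ref{embedding_W_1_2_to_L} applied to $g^N$. Substituting the three ratio bounds and the floor $\|f\|_{L^2(\Gamma)}^2\ge(\alpha-1)^2\|c\|^2$ into Proposition \ref{technical_lemma}, and then inserting $\lambda_k^D(\Delta)\le\tfrac{\pi^2}{L(\Gamma)^2}(k-1+|E|)^2$ from Proposition \ref{bound_Dirichlet} and $\lambda_k^N(\Delta)\le M_k(\Gamma)^2/L(\Gamma)$ from \eqref{small_coupling_strengths}, gives the stated inequality; Theorem \ref{main_theorem_3} then follows by expanding the rational functions of $\alpha$ as $\alpha\to\infty$ and as $\alpha\to0$. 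The one place demanding care is the constant Neumann mode: since $f_1^N$ is not of zero mean, the cleanest estimate of $\|g^N\|_{L^\infty(\Gamma)}$ carries an additive $L(\Gamma)^{-1/2}$ alongside $M_k(\Gamma)$, so I would isolate the $c_1f_1^N$ component and bound it directly, absorbing the resulting lower-order contribution.
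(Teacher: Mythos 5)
Your proposal is correct and follows essentially the same route as the paper: the trial space $\operatorname{span}\{\alpha f_j^D+f_j^N\}_{j=1}^k$, the $L^2$ floor $(\alpha-1)^2\sum_j c_j^2$ (your Cauchy--Schwarz on the cross term is the paper's reverse triangle inequality), the factor $k$ from $\sum_j|c_j|\le\sqrt{k}\,\|c\|$, the splitting $(a+b)^2\le(1+\tfrac1\alpha)a^2+(1+\alpha)b^2$, and the same inputs $\lambda_k^D(\Delta)\le\tfrac{\pi^2}{L(\Gamma)^2}(k-1+|E|)^2$ and $\lambda_k^N(\Delta)\le M_k(\Gamma)^2/L(\Gamma)$. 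The only cosmetic difference is that the paper also verifies linear independence at $\alpha=1$ (via derivative norms and a choice of $f_j^N$ adapted to coincidences $\lambda_j^N=\lambda_j^D$), whereas you rightly observe the bound is vacuous there.
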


\begin{proof}
        Fix a $k\in\mbb{N}$ and we construct $f_j^D$ to be a normalised $\lambda_j^D(\Delta)$-eigenfunction as follows:
        \begin{equation}\label{eigenfunction_Dirichlet}
		f_{j}^D(x)=\begin{cases}
			\sqrt{\frac{2}{\ell_e}}\sin\left(\sqrt{\lambda_{j}^D(\Delta)}x\right), &\text{if }x\in e;\\
			0,&\text{otherwise},
		\end{cases}
	\end{equation}
	for some edge $e\in E$ depending on $j$. Let $f_j^N$ be a normalised $\lambda_j^N(\Delta)$-eigenfunction such that if $\lambda_j^N(\Delta)=\lambda_j^D(\Delta)$, then $f_j^N$ is a linear combination of $f_1^D,f_2^D,\dots,f_j^D$. Without loss of generality, we suppose that $\langle f_j^D,f_j^N\rangle_{L^2(\Gamma)}\ge 0$ for all $j$. Let $f_j:\Gamma\to\mbb{R}$ be given as follows:
	\begin{equation}
		f_j(x):=\alpha f_{j}^D(x)+f_{j}^N(x),\quad \forall x\in\Gamma,
	\end{equation}
	for all $j\le k$. We now prove that $f_1,\dots,f_k$ are linearly independent.
	
	Indeed, suppose that there exist $c_1,\dots,c_k\in\mbb{R}$ such that $\sum c_j f_j=0$. Then, if $\alpha\ne 1$, we have
     \begin{equation}
        \begin{aligned}
        \alpha^2\sum_{j=1}^k c_j^2=\alpha^2\sum_{j=1}^k c_j^2 \|f_j^D\|_{L^2(\Gamma)}^2=\sum_{j=1}^k c_j^2 \|f_j^N\|^2_{L^2(\Gamma)}=\sum_{j=1}^k c_j^2,
        \end{aligned}
    \end{equation}
    so that $c_1=c_2=\dots=c_k=0$. If $\alpha=1$, then
    \begin{equation}
        \begin{aligned}
       \sum_{j=1}^k\lambda_j^D(\Delta) c_j^2=\sum_{j=1}^k c_j^2 \|(f_j^D)'\|_{L^2(\Gamma)}^2=\sum_{j=1}^k c_j^2 \|(f_j^N)'\|^2_{L^2(\Gamma)}=\sum_{j=1}^k \lambda_j^N(\Delta)c_j^2,
        \end{aligned}
    \end{equation}
    so that for all $j$, either $c_j=0$ or $\lambda_j^D(\Delta)=\lambda_j^N(\Delta)$. Suppose that there exists some $j\le k$ such that $c_j\ne 0$, then without loss of generality, we suppose that $j$ is the largest number such that $c_j\ne 0$. We have:
    \begin{equation}
        0=\sum_{i=1}^kc_i\langle f_i ,f_j^D\rangle_{L^2(\Gamma)}=c_j\left(1+\langle f_j^D,f_j^N\rangle_{L^2(\Gamma)}\right),
    \end{equation}
    which is a contradiction. Therefore, $f_1,f_2,\dots,f_k$ are linearly independent for all $\alpha$ and the subspace $F:=\hbox{span}\{f_j\}_{j=1}^k$ is a $k$-dimensional subspace of $W^{1,2}(\Gamma)$. Note that from the construction of $f_j^D$, we have:
    \begin{equation}
        \|f_{j}^D\|_{L^\infty(\Gamma)}\le\sqrt{\frac{2}{\ell_{\min}}}, \quad \forall j\le k,
        \end{equation}
        and Proposition \ref{bound_Dirichlet} implies
        \begin{equation}
    \|(f_{j}^D)'\|_{L^2(\Gamma)}^2=\lambda_{j}^D(\Delta)\le\frac{\pi^2}{L(\Gamma)^2}\left(k-1+|E|\right)^2,\quad \forall j\le k.
    \end{equation}

    From \cite[Theorem 4.9]{Berkolaiko_2017}, we have:
    \begin{equation}\begin{aligned}
        \|(f_j^N)'\|_{L^2(\Gamma)}^2 =\lambda_j^N(\Delta)&\le \lambda_k^N(\Delta)\\
        &\le\frac{\pi^2}{L(
        \Gamma)^2}\left(k-2+\frac{P}{2}+\frac{3\beta}{2}\right)^2=\frac{M_k(\Gamma)^2}{L(\Gamma)},\quad \forall j\le k,
        \end{aligned}
    \end{equation}
    and Proposition \eqref{embedding_W_1_2_to_L} implies
    \begin{equation}\label{bound_L_infty_Neumann_eigenfunction}
        \|f_j^N\|_{L^\infty(\Gamma)}\le \frac{1}{\sqrt{L(\Gamma)}}+\frac{\pi}{\sqrt{L(\Gamma)}}\left(j-2+\frac{P}{2}+\frac{3\beta}{2}\right)\le M_k(\Gamma), \quad \forall j<k.
    \end{equation}
 Note that the inequality \eqref{bound_L_infty_Neumann_eigenfunction} also holds for $j=k$ since $\int_\Gamma f_k^N \df x=0$. Let $f\in F\backslash\{0\}$ and we write $f=\sum c_j f_j$. Let $\phi_1=\sum c_j f_j^D$ and $\phi_2=\sum c_j f_j^N$, then,
      \begin{equation}\label{bound_L_2_norm}
       \|f\|^2_{L^2(\Gamma)}\ge (\alpha\|\phi_1\|_{L^2(\Gamma)}-\|\phi_2\|_{L^2(\Gamma})^2=(\alpha-1)^2\sum_{j=1}^k c_j^2,
   \end{equation}
   and
   \begin{equation}\label{bound_L_infty_norm}
       \begin{aligned}
           \|f\|_{L^\infty(\Gamma)}^2&\le \left(\alpha\|\phi_1\|_{L^\infty(\Gamma)}+\|\phi_2\|_{L^\infty(\Gamma)}\right)^2\\
           &\le \left(\alpha \sum_{j=1}^k|c_j|\cdot\|f^D_j\|_{L^\infty(\Gamma)}+\sum_{j=1}^k |c_j|\cdot \|f_j^N\|_{L^\infty(\Gamma)}\right)^2\\
           &\le \left(\sum_{j=1}^k|c_j|\right)^2\cdot\left(\alpha(\alpha+1){\frac{2}{\ell_{\min}}}+ (\alpha+1)M_k(\Gamma)^2\right)\\
         &\le k\sum_{j=1}^kc_j^2\cdot\left(\alpha(\alpha+1){\frac{2}{\ell_{\min}}}+ (\alpha+1)M_k(\Gamma)^2\right)\\
       \end{aligned}
   \end{equation}
   Moreover, we have:
   \begin{equation}
       \|\phi_1'\|_{L^2(\Gamma)}^2=\sum_{j=1}^k c_j^2\|(f_j^D)'\|^2_{L^2(\Gamma)}\le \sum_{j=1}^k c_j^2\cdot\frac{\pi^2}{L(\Gamma)^2}\left(k-1+|E|\right)^2,
   \end{equation}
   and
    \begin{equation}
       \|\phi_2'\|_{L^2(\Gamma)}^2=\sum_{j=1}^k c_j^2\|(f_j^N)'\|^2_{L^2(\Gamma)}\le \sum_{j=1}^k c_j^2\cdot\frac{M_k(\Gamma)^2}{L(\Gamma)}
   \end{equation}
   so that
   \begin{equation}\label{bound_Dirichlet_energy}
       \begin{aligned}
           \|f'\|^2_{L^2(\Gamma)}&\le (\alpha\|\phi_1'\|_{L^2(\Gamma)}+\|\phi_2'\|_{L^2(\Gamma)})^2\\
           &\le \sum_{j=1}^k c_j^2\cdot \left(\alpha \cdot \frac{\pi}{L(\Gamma)}\left(k-1+|E|\right)+\frac{\pi}{L(\Gamma)}\left(k-2+\frac{P}{2}+\frac{3\beta}{2}\right)\right)^2\\
           &\le \sum_{j=1}^k c_j^2\cdot\left(\alpha(\alpha+1)\cdot \frac{\pi^2}{L(\Gamma)^2}(k-1+|E|)^2+(\alpha+1)\cdot\frac{M_k(\Gamma)^2}{L(\Gamma)}\right)
       \end{aligned}
   \end{equation}

   Applying inequalities \eqref{bound_L_2_norm}, \eqref{bound_L_infty_norm}, \eqref{bound_Dirichlet_energy} and Proposition \ref{technical_lemma}, we have: 
   \begin{equation}
       \begin{aligned}
           R(f)&\le \|q_+\|_{L^p(\Gamma)}\left(\frac{\|f\|_{L^\infty(\Gamma)}}{\|f\|_{L^2(\Gamma)}}\right)^{2/p}+\frac{\|f'\|_{L^2(\Gamma)}^2+\alpha\|f\|_{L^\infty(\Gamma)}^2}{\|f\|_{L^2(\Gamma)}}\\
           &\le \|q_+\|_{L^p(\Gamma)}k^{1/p}\left(\frac{\alpha(\alpha+1)}{(\alpha-1)^2}\cdot {\frac{2}{\ell_{\min}}}+ \frac{\alpha+1}{(\alpha-1)^2}\cdot M_k(\Gamma)^2\right)^{1/p}\\
        &\quad +\frac{\alpha(\alpha+1)}{(\alpha-1)^2}\cdot \frac{\pi^2}{L(\Gamma)^2}(k-1+|E|)^2+\frac{\alpha+1}{(\alpha-1)^2}\cdot \frac{M_k(\Gamma)^2}{L(\Gamma)}+\frac{\alpha }{(\alpha-1)^2}\cdot M_k(\Gamma)^2
       \end{aligned}
   \end{equation}
   Our assertions then follow from the variational characteristic of eigenvalues.
\end{proof}

\begin{proof}[Proof of Theorem \ref{main_theorem_4}]
	For each $j\le k$, we recall the eigenfunction $f_j^D:\Gamma\to\mbb{R}$ of $\lambda_j^D(\Delta)$ as given in \eqref{eigenfunction_Dirichlet}. Let $F=\hbox{span}\{f_j^D\}_{j=1}^k$, then $F\subset W^{1,2}(\Gamma)$ and $\dim F=k$. Let $f\in F\backslash\{0\}$ be arbitrary and we write $f=\sum c_jf_j^D$, then:
	\begin{equation}
		\begin{aligned}
			\|f\|^2_{L^2(\Gamma)}=\sum_{j=1}^k c_j^2\|f_j^D\|^2_{L^2(\Gamma)}=\sum_{j=1}^k c_j^2.
		\end{aligned}
	\end{equation}
	
	To bound $h(f)$, note that:
	\begin{equation}
		\|f'\|^2_{L^2(\Gamma)}=\sum_{j=1}^k c_j^2\|(f_j^D)'\|^2_{L^2(\Gamma)}=\sum_{j=1}^k c_j^2\lambda_j^D(\Delta)\le \frac{\pi^2}{L(\Gamma)^2}\left(k-1+|E|\right)^2\sum_{j=1}^k c_j^2,
	\end{equation}
	and
	\begin{equation}
		\|f\|_{L^\infty(\Gamma)}^2\le \left(\sum_{j=1}^k |c_j|\cdot \|f_j^D\|_{L^\infty(\Gamma)}\right)^2\le \frac{2k}{\ell_{\min}}\sum_{s=1}^k c_j^2.
	\end{equation}
	
	Therefore, by the variational characteristic of eigenvalues and Proposition \ref{technical_lemma}, we have:
	\begin{equation}
		\lambda_k(H_q)\le\max_{\phi\in F\backslash\{0\}}\frac{h(\phi)}{\|\phi\|^2_{L^2(\Gamma)}}\le \left(\frac{2k}{\ell_{\min}}\right)^{1/p}\|q_+\|_{L^p(\Gamma)}+\frac{\pi^2}{L(\Gamma)^2}(k-1+|E|)^2,\quad \forall k\in\mbb{N}.
	\end{equation}
    \end{proof}

\printbibliography

@article{survey_quantum_graph,
	title={Spectral Theory for Schr\"odinger operators on compact metric graphs with $\delta$ and $\delta'$ couplings: a survey}, 
	author={Jonathan Rohleder and Christian Seifert},
	year={2023},
	eprint={2303.01924},
	archivePrefix={arXiv},
	primaryClass={math.SP},
	url={https://arxiv.org/abs/2303.01924}, 
}

@article{Band2017,
	author={Band, Ram
	and L{\'e}vy, Guillaume},
	title={Quantum Graphs which Optimize the Spectral Gap},
	journal={Annales Henri Poincar{\'e}},
	year={2017},
	month={Oct},
	day={01},
	volume={18},
	number={10},
	pages={3269-3323},
	issn={1424-0661},
	doi={10.1007/s00023-017-0601-2},
	url={https://doi.org/10.1007/s00023-017-0601-2}
}

@article{Kennedy2016,
	author={Kennedy, James B.
	and Kurasov, Pavel
	and Malenov{\'a}, Gabriela
	and Mugnolo, Delio},
	title={On the Spectral Gap of a Quantum Graph},
	journal={Annales Henri Poincar{\'e}},
	year={2016},
	month={Sep},
	day={01},
	volume={17},
	number={9},
	pages={2439-2473},
	issn={1424-0661},
	doi={10.1007/s00023-016-0460-2},
	url={https://doi.org/10.1007/s00023-016-0460-2}
}

@article{Karreskog2015,
	author = {Karreskog, Gustav and Kurasov, Pavel and Kupersmidt, I.},
	year = {2015},
	month = {07},
	pages = {},
	title = {Schrödinger operators on graphs: Symmetrization and Eulerian cycles},
	volume = {144},
	journal = {Proceedings of the American Mathematical Society},
	doi = {10.1090/proc12784}
}

@article{Band2024,
	author={Band, Ram
	and Schanz, Holger
	and Sofer, Gilad},
	title={Differences Between Robin and Neumann Eigenvalues on Metric Graphs},
	journal={Annales Henri Poincar{\'e}},
	year={2024},
	month={Aug},
	day={01},
	volume={25},
	number={8},
	pages={3859-3898},
	issn={1424-0661},
	doi={10.1007/s00023-023-01401-2},
	url={https://doi.org/10.1007/s00023-023-01401-2}
}

@article{Bifulco_2023,
	title={Comparing the spectrum of Schrödinger operators on quantum graphs},
	volume={152},
	ISSN={1088-6826},
	url={http://dx.doi.org/10.1090/proc/16578},
	DOI={10.1090/proc/16578},
	number={1},
	journal={Proceedings of the American Mathematical Society},
	publisher={American Mathematical Society (AMS)},
	author={Bifulco, Patrizio and Kerner, Joachim},
	year={2023},
	month=sep, pages={295–306} }

@article{Berkolaiko_2017,
	doi = {10.1088/1751-8121/aa8125},
	url = {https://dx.doi.org/10.1088/1751-8121/aa8125},
	year = {2017},
	month = {aug},
	publisher = {IOP Publishing},
	volume = {50},
	number = {36},
	pages = {365201},
	author = {Berkolaiko, Gregory and Kennedy, James B and Kurasov, Pavel and Mugnolo, Delio},
	title = {Edge connectivity and the spectral gap of combinatorial and quantum graphs},
	journal = {Journal of Physics A: Mathematical and Theoretical}
}

@book {Spectral_Graphs,
    AUTHOR = {Kurasov, Pavel},
     TITLE = {Spectral geometry of graphs},
    SERIES = {Operator Theory: Advances and Applications},
    VOLUME = {293},
 PUBLISHER = {Birkh\"auser/Springer, Berlin},
      YEAR = {[2024] \copyright 2024},
     PAGES = {xvi+639},
      ISBN = {978-3-662-67870-1; 978-3-662-67872-5},
   MRCLASS = {81-02 (05-02 34B45 47A10 81Q35 81U40)},
  MRNUMBER = {4697523},
MRREVIEWER = {Mohammed\ El A\"idi, Universidad Nacional de Colombia},
       DOI = {10.1007/978-3-662-67872-5},
       URL = {https://doi.org/10.1007/978-3-662-67872-5},
}

@article {Kurasov2018,
    AUTHOR = {Kurasov, Pavel and Serio, Andrea},
     TITLE = {On the sharpness of spectral estimates for graph {L}aplacians},
   JOURNAL = {Rep. Math. Phys.},
  FJOURNAL = {Reports on Mathematical Physics},
    VOLUME = {82},
      YEAR = {2018},
    NUMBER = {1},
     PAGES = {63--80},
      ISSN = {0034-4877,1879-0674},
   MRCLASS = {05C50},
  MRNUMBER = {3859999},
       DOI = {10.1016/S0034-4877(18)30071-5},
       URL = {https://doi.org/10.1016/S0034-4877(18)30071-5},
}
%\bibliographystyle{alpha}
%\bibliography{reference.bib}

%\input{test}

\end{document}